\documentclass[11pt]{article}
\usepackage{palatino}
\usepackage{amsmath}
\usepackage{amsthm}
\usepackage{amssymb}
\usepackage{latexsym}
\usepackage{pstricks}
\usepackage{graphicx, pst-plot, pst-node, pst-text, pst-tree}
\usepackage{titlesec}
\usepackage[small,it]{caption}
\setlength{\captionmargin}{0.4in}
\setlength{\abovecaptionskip}{-5pt}

\usepackage{color}
\definecolor{lightgray}{rgb}{0.8, 0.8, 0.8}
\definecolor{darkgray}{rgb}{0.7, 0.7, 0.7}
\definecolor{darkblue}{rgb}{0, 0, .4}

\usepackage[bookmarks,breaklinks]{hyperref}
\hypersetup{
        colorlinks=true,
        linkcolor=darkblue,
        anchorcolor=darkblue,
        citecolor=darkblue,
        pagecolor=darkblue,
        urlcolor=darkblue,
        pdfpagemode=UseThumbs,
        pdftitle={Almost avoiding permutations},
        pdfsubject={Combinatorics},
        pdfauthor={Robert Brignall, Shalosh B. Ekhad, Rebecca Smith, and Vincent Vatter},
}

\newtheorem{theorem}{Theorem}
\newtheorem{proposition}[theorem]{Proposition}

\newtheorem{conjecture}[theorem]{Conjecture}

\newtheorem{problem}[theorem]{Problem}

\newcommand\blfootnote[1]{%
  \begingroup
  \renewcommand\thefootnote{}\footnote{#1}%
  \addtocounter{footnote}{-1}%
  \endgroup
}

\newcommand{\Av}{\operatorname{Av}}
\newcommand{\C}{\mathcal{C}}
\newcommand{\sh}{\mathrm{sh}\ }

\setlength{\textwidth}{6.5in}
\setlength{\textheight}{8in}
\setlength{\topmargin}{0in}
\setlength{\headsep}{0.25in}
\setlength{\headheight}{0.25in}
\setlength{\oddsidemargin}{0.25pt}
\setlength{\evensidemargin}{0.25pt}
\makeatletter
\newfont{\footsc}{cmcsc10 at 8truept}
\newfont{\footbf}{cmbx10 at 8truept}
\newfont{\footrm}{cmr10 at 10truept}
\pagestyle{plain}

\renewenvironment{abstract}%
                {
                  \begin{list}{}%
                     {\setlength{\rightmargin}{1in}%
                      \setlength{\leftmargin}{1in}}%
                   \item[]\ignorespaces\begin{small}}%
                 {\end{small}\unskip\end{list}}


\newpagestyle{main}[\small]{
        \headrule
        \sethead[\usepage][][]
        {\sc Almost Avoiding Permutations}{}{\usepage}}

\title{\sc Almost Avoiding Permutations}
\author{
	\begin{tabular}{cc}
        Robert Brignall&Shalosh B. Ekhad\\
		{\small Department of Mathematics}&{\small Department of Mathematics}\\[-3pt]
		{\small University of Bristol}&{\small Rutgers University}\\[-3pt]
		{\small Bristol, UK}&{\small New Brunswick, NJ}\\[20pt]
        Rebecca Smith&Vincent Vatter\thanks{Partially supported by EPSRC grant GR/S53503/01.}\\
        {\small Department of Mathematics}&{\small Department of Mathematics}\\[-3pt]
		{\small SUNY Brockport}&{\small Dartmouth College}\\[-3pt]
		{\small Brockport, NY}&{\small Hanover, NH}\\[20pt]
	\end{tabular}
}

\titleformat{\section}
        {\large\sc}
        {\thesection.}{1em}{}   

\date{June 22, 2009}

\begin{document}
\maketitle

\pagestyle{main}

\begin{abstract}
We investigate the notion of almost avoiding a permutation: $\pi$ {\it almost avoids\/} $\beta$ if one can remove a single entry from $\pi$ to obtain a $\beta$-avoiding permutation.
\end{abstract}

\blfootnote{This paper appeared as \emph{Discrete Math.} 309  (2009), 6626--6631.}

\section{Introduction}\label{almost-intro}

The permutation $\pi$ of length $n$, written in one-line notation as $\pi(1)\pi(2)\cdots\pi(n)$, is said to {\it contain\/} the permutation $\sigma$ if $\pi$ has a subsequence that is order isomorphic to $\sigma$, and each such subsequence is said to be an {\it occurrence\/} of $\sigma$ in $\pi$ or simply a {\/$\sigma$ pattern\/}.  For example, $\pi=491867532$ contains $\sigma= 51342$ because of the subsequence $\pi(2)\pi(3)\pi(5)\pi(6)\pi(9)=91672$.  Permutation containment is easily seen to be a partial order on the set of all (finite) permutations, which we simply denote by $\le$.  If the permutation $\pi$ fails to contain $\sigma$ we say that $\pi$ {\it avoids\/} $\sigma$.

A downset in this permutation containment order is referred to as a {\it permutation class\/}; in other words, if $\C$ is a permutation class, $\pi\in\C$ and $\sigma\le\pi$, then $\sigma\in\C$.  We denote by $\C_n$ the set $\C\cap S_n$ (the permutations of length $n$ in $\C$) and we refer to $\sum_{n\ge 0} |\C_n|x^n$ as the {\it generating function of $\C$\/}.  Given any set of permutations $B$, the set $\Av(B)=\{\pi : \mbox{$\beta\not\le\pi$ for all $\beta\in B$}\}$ forms a permutation class and, conversely, for any permutation class $\C$ there is a unique antichain (set of pairwise incomparable elements) $B$ such that $\C=\Av(B)$; we call this antichain the {\it basis\/} of $\C$, and say that $\C$ is {\it finitely based\/} if $B$ is finite.

One area in which permutation classes arise is the study of sorting machines.  For example, Knuth~\cite{knuth:the-art-of-comp:3} showed the class $\Av(231)$ consists precisely of those permutations that can be sorted by a stack (a last-in first-out linear sorting machine), while Tarjan~\cite{tarjan:sorting-using-n:} observed that the class of permutations that can be sorted by a network consisting of two parallel queues (first-in first-out linear sorting machines) is $\Av(321)$.  A classic result in the field of permutation containment is that the number of permutations in $\Av(231)$ and in $\Av(321)$ of length $n$ are both equal to the $n$th Catalan number.  For bijections between the two sets, see the recent survey by Claesson and Kitaev~\cite{claesson:classification-:}.

Our interest in this paper is with permutations which ``almost lie'' in a given permutation class, a concept which we formalize as follows: given a permutation class $\C$ and natural number $t$, we say that the permutation $\pi$ {\it $t$-almost lies in\/} (or simply {\it almost lies in\/} if $t=1$) $\C$ if one can remove $t$ (or fewer) entries from $\pi$ to obtain a permutation that lies in $\C$.  We denote the set of permutations which $t$-almost lie in $\C$ by $\C^{+t}$; note that $\C^{+t}$ is a permutation class itself.

The notion of almost avoiding permutations has a natural interpretation in terms of sorting machines: if  $\C$ consists of those permutations which can be sorted by the machine $M$ then the permutations in $\C^{+1}$ are those which can be sorted by $M$ in parallel with a {\it one-time use buffer\/}, which we define as a machine which can hold one entry, once in the sorting process.  The classes $\C^{+t}$ for $t\ge 2$ then consist of those permutations which can be sorted by $M$ in parallel with $t$ one-time use buffers.  

Note that almost avoidance classes differ from the classes introduced by Noonan~\cite{noonan:the-number-of-p:}, who studied permutations with at most one copy of $321$; this permutation class, which we denote by $\Av(321^{\le 1})$, is strictly contained in $\Av(321)^{+1}$.

In the following two sections we provide the enumeration of $\Av(231)^{+1}$ and $\Av(321)^{+1}$, i.e., the permutations that can be ``almost stack-sorted'' and ``almost sorted by two parallel queues'', and then end with a conjecture.  By the usual symmetries of permutations, this completes the enumeration of permutations which almost avoid a pattern of length $3$.  Before this, we conclude the introduction with a general result, first proved by the Theory of Computing Research Group at the University of Otago in 2002.

\begin{proposition}[Otago Theory of Computing Research Group]
For any finitely based class $\C$ and positive integer $t$, the class $\C^{+t}$ is finitely based.
\end{proposition}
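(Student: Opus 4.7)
My plan is to induct on $t$ via the identity $\C^{+t}=(\C^{+(t-1)})^{+1}$, which reduces everything to the case $t=1$. Let $M$ denote the maximum length of a basis element of $\C$; I aim to prove that every basis element of $\C^{+1}$ has length at most $M^2+M$, which immediately implies finiteness.

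Suppose $\pi$ lies in the basis of $\C^{+1}$. Because $\pi\notin\C^{+1}$ while every proper subpattern of $\pi$ lies in $\C^{+1}$, the minimum number of entries whose deletion from $\pi$ yields a member of $\C$ is exactly $2$; fix such a pair $\{a,b\}$ and set $\sigma=\pi\setminus\{a,b\}\in\C$. Since neither $\{a\}$ nor $\{b\}$ alone hits every occurrence of a basis element of $\C$ inside $\pi$, there exist basis occurrences $O_a$ avoiding $a$ and $O_b$ avoiding $b$; the fact that $\{a,b\}$ is itself a hitting set forces $O_a=\{b\}\cup E_a$ and $O_b=\{a\}\cup E_b$, with $E_a,E_b\subseteq\sigma$ each of size at most $M-1$.

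The heart of the argument is to bound $|\sigma|$. Consider any entry $z\in\sigma\setminus(E_a\cup E_b)$. Both $O_a$ and $O_b$ survive deleting $z$, and since $\pi\setminus z\in\C^{+1}$ there must be a single entry $h_z$ hitting every basis occurrence inside $\pi\setminus z$; the containments $O_a\subseteq\{b\}\cup E_a$ and $O_b\subseteq\{a\}\cup E_b$ then pin $h_z$ into the intersection $E_a\cap E_b$. If more than $M$ such values of $z$ shared the same witness $h^*\in E_a\cap E_b$, every basis occurrence of $\pi$ avoiding $h^*$ would have to contain all of them---impossible, since such an occurrence has size at most $M$, and at least one exists (otherwise $\{h^*\}$ alone would hit everything, forcing $\pi\in\C^{+1}$). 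Pigeonholing over the at most $M-1$ candidates for $h^*$ yields $|\sigma\setminus(E_a\cup E_b)|\leq M(M-1)$, whence $|\pi|\leq 2+2(M-1)+M(M-1)=M^2+M$. The principal obstacle, I expect, is recognising that the witness $h_z$ is confined to the small intersection $E_a\cap E_b$; without that containment there would be no finite set to pigeonhole over, and a more naive accounting (for instance, via well-quasi-ordering, which fails for permutations) would not close the argument.
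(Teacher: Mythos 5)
Your argument is correct, and it arrives at the same bound $m(m+1)$ on the length of a basis element that the paper obtains, but by a genuinely different route. The paper never works with a minimal basis element: it shows that \emph{every} $\tau\notin\C^{+1}$ contains a permutation of length at most $m(m+1)$ outside $\C^{+1}$, by fixing one occurrence $O$ of a basis element of $\C$ in $\tau$ and, for each of the at most $m$ entries $x$ of $O$, an occurrence of a basis element avoiding $x$ (which exists because $\tau\notin\C^{+1}$); the union of these at most $m+1$ occurrences is the small witness, since deleting any single entry from it leaves either $O$ or one of the auxiliary occurrences intact. You instead use minimality in an essential way: the fact that every one-entry deletion of $\pi$ lands in $\C^{+1}$ supplies the witness map $z\mapsto h_z$, and since $h_z$ must meet both $O_a$ and $O_b$ while $a\notin O_a$, $b\notin O_b$, and $E_a,E_b\subseteq\sigma$, the intersection $O_a\cap O_b$ collapses to $E_a\cap E_b$, which is what makes the pigeonhole over at most $M-1$ witnesses and at most $M$ values of $z$ per witness go through. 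Both proofs are sound and both reduce $t>1$ to $t=1$ by the same iteration $\C^{+t}=(\C^{+(t-1)})^{+1}$; the paper's version is shorter and makes the ``small bad subpattern'' phenomenon transparent for arbitrary non-members, while yours yields a more detailed structural picture of a minimal permutation outside $\C^{+1}$.
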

\begin{proof}
It suffices to prove that $\C^{+1}$ is finitely based if $\C$ is, as then the proposition follows by iteration.  Suppose that the longest basis element of $\C$ has length $m$ and consider a permutation $\tau\notin\C^{+1}$.  Since $\tau\notin\C$, $\tau$ contains a subsequence of length at most $m$ order isomorphic to a basis element of $\C$.  Furthermore, since $\tau\notin\C^{+1}$, every time we remove a single element from this subsequence, we find another occurrence of a basis element of $\C$ (which is also of length at most $m$).  By taking the original subsequence together with these additional occurrences of basis elements we see that $\tau$ contains a permutation of length at most $m(m+1)$ which does not lie in $\C^{+1}$, verifying that the basis of $\C^{+1}$ is finite.
\end{proof}

\section{$\mathbf{Av(321)^{+1}}$}

To enumerate the permutations in $\Av(321)^{+1}$ we use the Robinson-Schensted algorithm.  While a detailed description of this algorithm can be found in Sagan's text~\cite{sagan:the-symmetric-g:}, a few details suffice for our arguments.  First recall that the Robinson-Schensted algorithm associates to each permutation $\pi$ of length $n$ a pair, denoted $(P(\pi),Q(\pi))$, of standard Young tableaux (SYT), each with $n$ cells and of the same shape.  We denote the shape of $P(\pi)$ by $\sh P(\pi)$, so in the case where $\pi$ is of length $n$, $\sh P(\pi)=\sh Q(\pi)$ is a partition, say $\lambda=(\lambda_1,\dots,\lambda_r)$ of $n$ (which we denote by $\lambda\vdash n$).  Schensted~\cite{schensted:longest-increas:} proved that the length of the longest decreasing subsequence $\pi$ is equal to the number of rows of $P(\pi)$, and thus if $\sh P(\pi)=\lambda=(\lambda_1,\dots,\lambda_r)$, then the longest decreasing subsequence of $\pi$ is of length $r$.  Greene~\cite{greene:an-extension-of:} gave a generalization of Schensted's theorem, from which the following proposition routinely follows.

\begin{proposition}\label{greene-321}
If $\sh P(\pi)=(\lambda_1,\dots,\lambda_r)$ then the longest $k\cdots 21$-avoiding subpermutation in $\pi$ has length $\lambda_1+\cdots+\lambda_{k-1}$.
\end{proposition}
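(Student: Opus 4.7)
The plan is to derive this proposition by combining Greene's theorem directly with Dilworth's theorem applied to the natural poset on the entries of $\pi$.

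First I would recall the precise statement of Greene's theorem from \cite{greene:an-extension-of:}: for any permutation $\pi$ with RSK shape $\lambda=(\lambda_1,\dots,\lambda_r)$ and any $j\ge 1$, the quantity $\lambda_1+\cdots+\lambda_j$ equals the maximum cardinality of a subsequence of $\pi$ that can be decomposed as a union of $j$ increasing subsequences. Taking $j=1$ recovers Schensted's original theorem (since the longest increasing subsequence has length $\lambda_1$), which is the statement the paper already uses as the $k=2$ special case of the proposition (noting that $k\cdots 21$-avoiding for $k=2$ means avoiding $21$, i.e.\ being increasing).

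Next I would observe that the notion ``longest $k\cdots 21$-avoiding subpermutation'' translates into a covering statement through Dilworth's theorem. Consider the partial order on the entries of $\pi$ in which entry at position $i$ with value $\pi(i)$ precedes entry at position $j$ with value $\pi(j)$ whenever $i<j$ and $\pi(i)<\pi(j)$; then chains are exactly increasing subsequences and antichains are exactly decreasing subsequences. A subsequence $\sigma$ of $\pi$ avoids $k\cdots 21$ precisely when it has no decreasing subsequence of length $k$, i.e.\ no antichain of size $k$. By Dilworth's theorem this is equivalent to $\sigma$ admitting a partition into at most $k-1$ increasing subsequences. Hence the longest $k\cdots 21$-avoiding subpermutation of $\pi$ has the same length as the largest union of $k-1$ increasing subsequences in $\pi$.

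Combining these two facts gives the result: the longest $k\cdots 21$-avoiding subpermutation of $\pi$ has length equal to $\lambda_1+\cdots+\lambda_{k-1}$, as claimed. There is no real obstacle here beyond carefully aligning the conventions: one must verify that Greene's theorem, as stated for unions of increasing subsequences and the row lengths of $P(\pi)$, is compatible with the paper's convention (in which the number of rows of $P(\pi)$ records the longest \emph{decreasing} subsequence, consistent with the standard row-insertion RSK). Once the translation via Dilworth is made, the proposition is indeed an immediate consequence of Greene's theorem.
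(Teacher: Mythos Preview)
Your proposal is correct and is precisely the ``routine'' derivation the paper has in mind: the paper gives no proof at all, merely stating that the proposition follows from Greene's theorem, and your argument via Dilworth's theorem (translating ``no decreasing subsequence of length $k$'' into ``coverable by $k-1$ increasing subsequences'') is the standard way to make that step explicit. One minor remark: for the direction ``union of $k-1$ increasing subsequences $\Rightarrow$ avoids $k\cdots 21$'' you do not actually need Dilworth, since any decreasing subsequence meets each increasing subsequence in at most one point; Dilworth is only needed for the converse.
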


The first step in our enumeration is to characterize the shapes of SYT that can arise from a permutation which almost avoids $321$.

\begin{figure}[t]
\begin{center}
\psset{nodesep=5pt,colsep=60pt,rowsep=15pt}
\psset{xunit=0.20in, yunit=0.20in}
\psset{linewidth=0.02in}
$\underbrace{\underbrace{%
\begin{pspicture}(0,0)(9,4)
\psline(0,1)(0,4)
\psline(1,1)(1,4)
\psline(2,2)(2,4)
\psline(3,2)(3,4)
\psline(0,1)(1,1)
\psline(0,2)(3.6,2)
\psline(0,3)(3.8,3)
\psline(0,4)(3.4,4)
\psline(7,2)(7,4)
\psline(8,2)(8,4)
\psline(9,2)(9,4)
\psline(10,3)(10,4)
\psline(11,3)(11,4)
\psline(6.2,2)(9,2)
\psline(6.6,3)(11.7,3)
\psline(6.5,4)(11.4,4)
%
%
%
\psline(15,3)(15,4)
\psline(16,3)(16,4)
\psline(17,3)(17,4)
\psline(14.6,3)(17,3)
\psline(14.3,4)(17,4)
\rput[c](5,3.5){$\cdots$}
\rput[c](13,3.5){$\cdots$}
\end{pspicture}}_{\mbox{\footnotesize{$\ell$ columns}}}%
\begin{pspicture}(0,0)(8,1)
\end{pspicture}%
}_{\mbox{\footnotesize{$k$ columns}}}$
\end{center}
\caption{The shape of a Standard Young Tableaux obtained from the Robinson-Schensted algorithm applied to a permutation in $\Av(321)^{+1}$ that contains at least one $321$ pattern}\label{RowOfThree}
\end{figure}
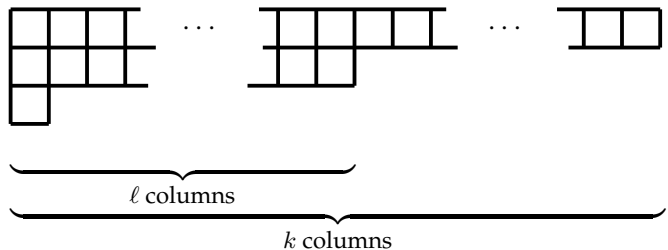

\begin{proposition}\label{almost-321-characterization}
The permutation $\pi$ lies in $\Av(321)^{+1}$ if and only if $\sh P(\pi)$ is of the form $(k)$, $(k,\ell)$, or $(k,\ell,1)$ for some integers $k\ge\ell\ge 1$.
\end{proposition}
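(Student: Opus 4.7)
\emph{Proof plan.} My approach is to reduce the statement immediately to Proposition \ref{greene-321}. Write $\sh P(\pi) = (\lambda_1, \ldots, \lambda_r)$ and let $n = \lambda_1 + \cdots + \lambda_r = |\pi|$. By definition, $\pi \in \Av(321)^{+1}$ precisely when one can delete a single entry (or none at all) from $\pi$ and end up with a $321$-avoider. I would first reformulate this as: $\pi$ contains a $321$-avoiding subpermutation of length at least $n-1$. This equivalence is essentially tautological, since the omitted entry and the surviving subpermutation of length $n-1$ determine each other, and the case where $\pi$ itself already avoids $321$ (length $n$ subpermutation) is included.

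Next, I would invoke Proposition \ref{greene-321} with $k=3$, which states that the longest $321$-avoiding subpermutation of $\pi$ has length exactly $\lambda_1 + \lambda_2$. Combining this with the reformulation above, $\pi \in \Av(321)^{+1}$ if and only if
\[
\lambda_1 + \lambda_2 \;\geq\; n - 1 \;=\; \lambda_1 + \lambda_2 + \lambda_3 + \cdots + \lambda_r - 1,
\]
that is, if and only if $\lambda_3 + \lambda_4 + \cdots + \lambda_r \leq 1$. Since $\lambda$ is a partition, its parts are weakly decreasing, so this inequality holds exactly when $\lambda$ has at most two (nonzero) parts, or when $\lambda$ has exactly three parts with $\lambda_3 = 1$. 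This yields precisely the shapes $(k)$, $(k,\ell)$, and $(k,\ell,1)$ with $k \geq \ell \geq 1$, matching the claimed characterization.

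I do not foresee a substantive obstacle here. The proposition is essentially a one-line corollary of Greene's extension of Schensted's theorem once one observes that the ``almost avoiding'' condition translates into the containment of a $321$-avoiding subpermutation of codimension at most one. Beyond the routine partition-shape bookkeeping, no case analysis or construction is required; the only point worth being careful about is allowing the ``no deletion'' case, which is automatically absorbed by the weak inequality $\lambda_1 + \lambda_2 \geq n - 1$.
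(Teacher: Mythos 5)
Your argument is correct, and it is tighter than the one in the paper. The reverse direction coincides with the paper's (a shape $(k,\ell,1)$ forces a $321$-avoiding subpermutation of length $n-1$, via Proposition~\ref{greene-321}), but your forward direction is genuinely different: you apply Proposition~\ref{greene-321} with $k=3$ once and read off the entire biconditional from the single inequality $\lambda_1+\lambda_2\ge n-1$, i.e.\ $\lambda_3+\cdots+\lambda_r\le 1$. The paper instead argues the forward direction through forbidden patterns --- a permutation in $\Av(321)^{+1}$ contains no $4321$ and no two disjoint copies of $321$, hence no $123$-avoider of length $6$ --- and then translates these facts into the shape constraints ``at most three rows'' and ``at most one column of length $3$,'' which requires invoking both the row and the (conjugate) column forms of Greene's theorem. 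What your route buys is economy: no case analysis and only one appeal to Greene. What the paper's route buys is the remark immediately following the proposition, namely that $\pi\in\Av(321)^{+1}$ if and only if $\pi$ avoids $4321$ and has no two disjoint occurrences of $321$, which bounds the basis elements of $\Av(321)^{+1}$ by length $6$; that corollary does not fall out of your version, though it is not needed for the statement itself. The only point to make explicit in a final write-up is the (easy) fact underlying Proposition~\ref{greene-321} for $k=3$: a subpermutation is $321$-avoiding exactly when it is a union of two increasing subsequences, so its maximal length is $\lambda_1+\lambda_2$ by Greene's theorem.
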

\begin{proof}
No permutation in $\Av(321)^{+1}$ can contain a $4321$ pattern as then there would be no entry whose removal gives a $321$-avoiding permutation.  Similarly, no permutation in $\Av(321)^{+1}$ can contain two disjoint occurrences of $321$.  This means that such a permutation cannot contain a $123$-avoiding permutation of length $6$.  Translating into SYT, this means that, for all $\pi\in\Av(321)^{+1}$, $\sh P(\pi)$ has no columns of length $4$ or greater (it avoids $4321$) and at most one column of length $3$ (its longest $123$-avoiding subpermutation has length at most $5$, so this follows from Proposition~\ref{greene-321}).  This implies that $\sh P(\pi)$ is of one of the forms listed in the statement of the proposition.  For the other direction, note that if $\sh P(\pi)=(k)$ or $\sh P(\pi)=(k,\ell)$ then $\pi$ avoids $321$, while if $\sh P(\pi)=(k,\ell,1)$ then $\pi$ contains a subpermutation of length $k+\ell$ which avoids $321$, completing the proof.
\end{proof}

Note that the proof of Proposition~\ref{almost-321-characterization} shows that $\pi\in\Av(321)^{+1}$ if and only if $\pi$ avoids $4321$ and does not contain two disjoint occurrences of $321$.  Thus the basis elements of $\Av(321)^{+1}$ are all of length at most $6$ and can be readily generated by computer.

By Proposition~\ref{almost-321-characterization} and the Robinson-Schensted algorithm we now have that
$$
|\Av_n(321)^{+1}|
=
\sum_{\lambda=(k,\ell)\vdash n} (f^{\lambda})^2
+
\sum_{\lambda=(k,\ell,1)\vdash n} (f^{\lambda})^2,
$$
where $f^\lambda$ denotes the number of SYT of shape $\lambda$.  The first sum is simply the number of $321$-avoiding permutations which, as stated in the introduction, is equal to the $n$th Catalan number, $C_n$.  To evaluate the second sum we use the Hook Length Formula, which states that for $\lambda\vdash n$, $f^\lambda$ is equal to $n!$ divided by the product of the hook lengths of cells in the Ferrers diagram of $\lambda$.  In the case of $\lambda=(k,\ell,1)$, the product of the hook lengths of cells in the top row is
$$
(k+2)k\cdots(k-\ell+2)(k-\ell)\cdots 1
=
\frac{(k+2)k!}{(k-\ell+1)},
$$
the product for the middle row is $(\ell+1)(\ell-1)!$, and the solitary cell in the bottom row has a hook length of $1$.  Thus we have:
\begin{eqnarray*}
|\Av_n(321)^{+1}|
&=&
C_n
+
\sum_{(k,\ell,1)\vdash n}
\left(\frac{n!(k-\ell+1)}{(k+2)k!(\ell+1)(\ell-1)!}\right)^2\\
&=&
C_n
+
\sum_{k=\lfloor n/2\rfloor}^{n-2}
\left(\frac{n!(2k-n+2)}{(k+2)k!(n-k)(n-k-2)!}\right)^2.
\end{eqnarray*}

An empirical calculation in Maple implies that these numbers likely have the generating function
$$
\frac{1-8x+13x^2+24x^3-48x^4-(1-6x+x^2+34x^3-26x^4-4x^5)\sqrt{1-4x}}
{2x^2(1-x)(1-4x)^2}.
$$

\section{$\mathbf{Av(231)^{+1}}$}

Our approach to enumerating the class $\Av(231)^{+1}$ differs significantly from the approach used in the previous section and makes use of the following definition: the entry $\pi(i)$ in $\pi$ is called {\it essential\/} if its removal results in a $231$-avoiding permutation.  For example, the permutation $\pi=1742653$ contains two essential entries, $\pi(5)=4$ and $\pi(7)=3$.  (Also note that if $\pi\in\Av(231)$, then by our definition every entry of $\pi$ is essential.)  As a first step, we make the following observation.

\begin{proposition}\label{prop-essential-elements}
An essential entry in the permutation $\pi\in\Av(231)^{+1}$ participates as the minimum entry in either all or none of the occurrences of $231$ in $\pi$.
\end{proposition}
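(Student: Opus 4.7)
The plan is to argue the contrapositive: if an essential entry $\pi(i)$ serves as the minimum of some 231 occurrence $P_1$ and in a non-minimum role (middle value or maximum value) in another 231 occurrence $P_2$, then I will exhibit a third 231 occurrence that does not use $\pi(i)$. Since essentialness of $\pi(i)$ means every 231 pattern in $\pi$ must involve $\pi(i)$, such a third pattern gives the required contradiction.

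First I would set notation: write $P_1=(a,b,\pi(i))$ with $a,b$ at positions $p_a<p_b<i$ and values $\pi(i)<a<b$, which is forced by $\pi(i)$ being the ``1'' of $P_1$. The key observation is then that whichever non-minimum role $\pi(i)$ plays in $P_2$, the entry of $P_2$ that is \emph{its} minimum --- call it $q$, at position $p_q$ with value $v_q$ --- must sit strictly to the right of position $i$ and have value strictly less than $\pi(i)$. This is because in a 231 pattern the minimum occupies the rightmost of the three positions, while $\pi(i)$ occupies either the leftmost position (if $\pi(i)$ is the ``2'' of $P_2$) or the middle position (if $\pi(i)$ is the ``3'' of $P_2$); in either case $p_q>i$ and $v_q<\pi(i)$.

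With $q$ in hand, the triple $(a,b,q)$ is positioned as $p_a<p_b<i<p_q$ with values satisfying $v_q<\pi(i)<a<b$, so $v_q<a<b$ with the largest value ($b$) sitting in the middle position and the smallest ($v_q$) in the rightmost. This is exactly the 231 shape, and the triple does not involve $\pi(i)$, contradicting essentialness.

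The main potential pitfall is simply the bookkeeping --- keeping straight which entry plays which role in each occurrence and confirming that $q$ must sit to the right of position $i$ regardless of whether $\pi(i)$ is the ``2'' or the ``3'' of $P_2$. Once those role assignments are fixed, the required inequalities chain together in a single line, and no structural property of $\Av(231)^{+1}$ beyond the definition of essentialness enters the argument.
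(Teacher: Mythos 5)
Your argument is correct and is essentially identical to the paper's proof: both assume the essential entry is the minimum of one occurrence and non-minimal in another, observe that the minimum of the second occurrence lies to the right of and below the essential entry, and splice it onto the top two entries of the first occurrence to produce a $231$ pattern avoiding the essential entry, contradicting essentialness. Your write-up just makes the position/value bookkeeping more explicit than the paper's one-line "Clearly."
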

\begin{proof}
Suppose, to the contrary, that $\pi\in\Av(231)^{+1}$ contains indices $i<j<k$ such $\pi(i)\pi(j)\pi(k)$ is order isomorphic to $231$ and $\pi(k)$ is essential, but that $\pi(k)$ also participates in another $231$ pattern as a non-minimal element.  Label the minimal element of this later pattern $\pi(\ell)$.  Clearly $\pi(i)\pi(j)\pi(\ell)$ is also order isomorphic to $231$, so $\pi-\pi(k)\notin\Av(231)$, a contradiction to the assumption that $\pi(k)$ is essential.
\end{proof}

By Proposition~\ref{prop-essential-elements} we can divide the essential entries of a permutation $\pi\in\Av(231)^{+1}$ into {\it small essential entries\/}, which participate as the minimum entry in all occurrences of $231$, and {\it large essential entries\/}, which participate as the minimum entry in no occurrences of $231$.

In enumerating $\Av(231)^{+1}$, we use two generating functions frequently,
\begin{eqnarray*}
f&=&\mbox{the generating function for $\Av(231)^{+1}$, and}\\
c&=&\mbox{the generating function for the Catalan numbers, so also for $\Av(231)$,}\\
&=&\frac{1-\sqrt{1-4x}}{2x}.
\end{eqnarray*}
Both of these generating functions have constant term $1$.

\begin{proposition}\label{prop-no-greatest}
The generating function for permutations in $\Av(231)^{+1}\setminus\Av(231)$ in which the greatest element is not involved in a copy of $231$ is given by $2(f-c)xc$.
\end{proposition}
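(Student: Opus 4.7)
The plan is to decompose $\pi$ at the position of its greatest element. Write $n$ for the length of $\pi$ and suppose the maximum entry $n$ sits at position $i$, so $\pi = L\, n\, R$ where $L = \pi(1)\cdots\pi(i-1)$ and $R = \pi(i+1)\cdots\pi(n)$. The hypothesis that $n$ is not involved in any copy of $231$ means there is no $a$ to the left of $n$ and $c$ to the right with $c<a$; equivalently, every entry of $L$ is smaller than every entry of $R$. Hence, as sets, $L$ consists of $\{1,\dots,i-1\}$ and $R$ consists of $\{i,\dots,n-1\}$.

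Next I would show that every $231$ pattern in $\pi$ lies entirely inside $L$ or entirely inside $R$. The entry $n$ cannot play any role in a $231$ (by assumption), and a $231$ pattern with entries drawn from both $L$ and $R$ is impossible: the middle (largest) entry of such a pattern would have to be in $R$, which forces the smallest entry to come from $R$ as well, but then the remaining entry from $L$ would be smaller than the ``3'' from $R$, contradicting $L<R$ elementwise. So the $231$ patterns of $\pi$ partition into those inside $L$ and those inside $R$.

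Given this partition, $\pi$ lies in $\Av(231)^{+1}$ if and only if at most one of $L,R$ contains a copy of $231$ and the side that does contain one lies in $\Av(231)^{+1}$ (removing the relevant entry then kills every $231$ in $\pi$). Since we also require $\pi\notin\Av(231)$, exactly one of $L,R$ contains a $231$. This gives two disjoint cases: either $L\in\Av(231)$ and $R\in\Av(231)^{+1}\setminus\Av(231)$, or vice versa. The generating function for $\Av(231)$ is $c$, the one for $\Av(231)^{+1}\setminus\Av(231)$ is $f-c$, and the entry $n$ itself contributes a factor of $x$, so each case is enumerated by $c\cdot x\cdot(f-c)$. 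Summing the two cases yields $2(f-c)xc$.

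The only delicate step is the case analysis in the third paragraph, namely verifying that removing a single entry from whichever side contains a $231$ also clears $\pi$ of all $231$ patterns; this is immediate from the fact that $231$ patterns do not cross between $L$ and $R$. The rest is routine, and no step should require difficult combinatorial identities.
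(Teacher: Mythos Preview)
Your proof is correct and follows essentially the same decomposition as the paper's: split $\pi$ at its maximum into blocks $L$ and $R$, observe that the hypothesis forces $L<R$ elementwise, and conclude that exactly one block lies in $\Av(231)^{+1}\setminus\Av(231)$ while the other lies in $\Av(231)$. The paper's proof is considerably terser (it simply asserts the block structure via a picture), whereas you spell out why no $231$ can straddle the two blocks; your phrasing in that paragraph is slightly garbled (the contradiction is that the ``2'' entry from $L$ would have to exceed the ``1'' entry from $R$), but the argument itself is sound.
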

\begin{proof}
The plot of a permutation of the specified form can be divided into the greatest entry and two regions, $A$ and $B$, as depicted in Figure~\ref{fig-no-greatest}.  One (but not both) of the two regions $A$ or $B$ must contain a permutation in $\Av(231)^{+1}\setminus\Av(231)$, while the other must contain a $231$-avoiding permutation.  This leads to the generating function specified in the proposition.
\end{proof}

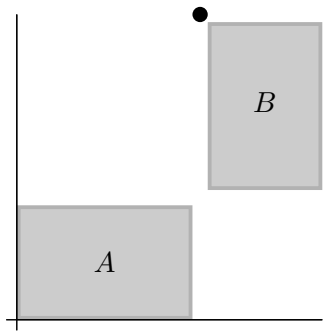
\begin{figure}
\begin{center}
\psset{xunit=0.008in, yunit=0.008in}
\psset{linewidth=0.005in}
\begin{pspicture}(0,-10)(200,210)
\psframe[linecolor=darkgray,fillstyle=solid,fillcolor=lightgray,linewidth=0.02in](0,0)(115,75)
\psframe[linecolor=darkgray,fillstyle=solid,fillcolor=lightgray,linewidth=0.02in](125,85)(200,195)
\pscircle*(120,200){0.04in}
\rput[c](57.5,37.5){$A$}
\rput[c](162.5,142.5){$B$}
\psaxes[dy=1000, Dy=1, dx=1000, Dx=1, tickstyle=bottom, showorigin=false, labels=none](0,0)(200,200)
\end{pspicture}
\end{center}
\caption{A permutation in $\Av(231)^{+1}\setminus\Av(231)$ in which the greatest element is not involved in a copy of $231$.}
\label{fig-no-greatest}
\end{figure}

\begin{proposition}\label{prop-extreme-essential}
The generating function for permutations in $\Av(231)^{+1}$ with an essential greatest (resp., leftmost, rightmost, or least) entry is $x^2c'+xc-c+1$.
\end{proposition}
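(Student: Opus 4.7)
The plan is to treat each of the four extreme-entry cases by the same ``remove the extreme entry, remember what you removed'' bijection. For the greatest case, the key observation is that $\pi\in\Av(231)^{+1}_n$ has its greatest entry essential precisely when $\pi-n\in\Av_{n-1}(231)$, so the map
\[
\pi \;\longmapsto\; \bigl(\pi-n,\;\text{position of } n \text{ in }\pi\bigr)
\]
gives a bijection between $\{\pi\in\Av(231)^{+1}_n:\text{greatest essential}\}$ and $\Av_{n-1}(231)\times\{1,\ldots,n\}$, so this set has $nC_{n-1}$ elements. The three remaining cases work identically via $\pi\mapsto(\pi|_{[2,n]},\pi(1))$ for leftmost, $\pi\mapsto(\pi|_{[1,n-1]},\pi(n))$ for rightmost, and $\pi\mapsto(\pi-1,\,\text{position of }1)$ for least, each again yielding $nC_{n-1}$ permutations.

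To restrict to $\Av(231)^{+1}\setminus\Av(231)$, which is what the generating function in the statement counts, I would use that every entry of a permutation in $\Av_n(231)$ is essential, so in each of the four cases the bijection above restricts to an injection $\Av_n(231)\hookrightarrow\Av_{n-1}(231)\times\{1,\ldots,n\}$ of image size $C_n$. Subtracting, the number of permutations in $\Av(231)^{+1}_n\setminus\Av_n(231)$ with the specified extreme entry essential is $nC_{n-1}-C_n$, the same for all four cases.

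Finally, using $\sum_{n\ge 1} nC_{n-1}x^n=x(xc)'=xc+x^2c'$ and $\sum_{n\ge 1}C_nx^n=c-1$, the generating function equals
\[
\sum_{n\ge 1}(nC_{n-1}-C_n)x^n \;=\; x^2c'+xc-c+1,
\]
as claimed. The only real subtlety is verifying that each of the four removal-of-extreme maps is a genuine bijection onto $\Av_{n-1}(231)\times\{1,\ldots,n\}$: the forward direction uses the ``essential'' hypothesis to place the residual permutation in $\Av_{n-1}(231)$, and the reverse direction uses that inserting the extreme entry back into any $\sigma\in\Av_{n-1}(231)$ always yields a permutation in $\Av(231)^{+1}_n$ whose extreme entry is essential --- both immediate from the definitions.
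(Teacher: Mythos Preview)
Your proof is correct and takes essentially the same approach as the paper: remove the extreme entry to obtain a $231$-avoider of length $n-1$ together with one of $n$ choices for its position (or value), giving the generating function $x^2c'+xc$, and then subtract off the $231$-avoiders themselves (generating function $c-1$). You are simply more explicit than the paper about the bijection in each of the four cases and about the fact that the formula counts permutations in $\Av(231)^{+1}\setminus\Av(231)$.
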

\begin{proof}
The cases are all similar, so we count permutations with an essential greatest entry.  To construct such a permutation, one must insert a new greatest entry into a $231$-avoider (such permutations have the generating function $x^2c'+xc$ because there are $n+1$ ways to insert a new greatest entry into any $231$-avoider, and the number of $231$-avoiders is the $n$th Catalan number) without creating a $231$-avoider (these have the generating function $c-1$).
\end{proof}

For the following results we need a bit of notation; for a permutation $\pi\in S_n$ and sets $A,B\subseteq[n]$, we write $\pi(A\times B)$ for the permutation which is order isomorphic to the subsequence of $\pi$ which has indices from $A$ and values in $B$.

\begin{proposition}\label{prop-small-essential}
The generating function for permutations in $\Av(231)^{+1}$ with an essential small entry in which the greatest entry participates in a copy of $231$ but is not essential is $x^4c'^2+xc(x^2c'+xc-c+1)$.
\end{proposition}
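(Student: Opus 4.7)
The plan is to decompose such a $\pi$ around the greatest entry $g=n$, at some position $p$, and the essential small entry $e$. Since $g$ is the overall maximum it must be the middle element in every 231 pattern containing it, so such a pattern has the form $(a,g,c)$ with $a$ to the left of $g$ and $c$ to its right. Because $e$ is the minimum of every 231 pattern in $\pi$ (by Proposition~\ref{prop-essential-elements}) and $g$ participates in at least one, $e$ equals this $c$ and hence lies to the right of $g$. Writing $v:=e$, $\pi_L$ for the prefix of length $p-1$, and $\pi_R$ for the suffix of length $n-p$, the fact that $\pi-e$ is a 231-avoider (with $n$ still at position $p$, so all values to its left are less than all to its right) forces $\pi_L$ to be a 231-avoider with value set $\{1,\ldots,p\}\setminus\{v\}$ and $\sigma:=\pi_R-v$ to be a 231-avoider with value set $\{p+1,\ldots,n-1\}$. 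The hypothesis that $g$ lies in a 231 pattern becomes $v\le p-1$.

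Next I characterize when $g$ is not essential, i.e.\ when $\pi-g$ still contains a 231 pattern. Any such pattern must have $v$ as its minimum, so it requires an ascent of two entries with values exceeding $v$ located to the left of $v$. Since every entry of $\pi_R$ other than $v$ has value greater than $v$, this occurs if and only if either (A) $\pi_L$ contains an ascent of two entries with values greater than $v$, or (B) $v$ is not the first entry of $\pi_R$. I split accordingly by the position of $v$ in $\pi_R$. If $v$ is not first in $\pi_R$ then (B) holds automatically: here $(\pi_L,v)$ is unrestricted, contributing $\sum_{N\ge 1} N C_N x^N = xc'$, while $\pi_R$ is built by placing $v$ at any of the $n-p-1$ non-first positions of a 231-avoider of length $n-p-1$, contributing $\sum_{m\ge 2}(m-1)C_{m-1}x^m = x^2 c'$; together with the factor $x$ for $g$ this yields $x^4(c')^2$.

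If instead $v$ is first in $\pi_R$ then $\pi_R=v\sigma$ contributes $xc$, and I must enforce condition (A). To count the complementary pairs $(\pi_L,v)$, in which the values of $\pi_L$ greater than $v$ appear in decreasing order, let $d(N,k)$ denote the number of 231-avoiders of length $N$ whose $k$ largest values are decreasing. Decomposing by the position of the maximum yields $d(N,k)=\sum_{i=1}^{N-k+1}C_{i-1}\,d(N-i,k-1)$ with $d(N,0)=C_N$, so the generating functions $D_k=\sum_N d(N,k)x^N$ satisfy $D_k=xc\cdot D_{k-1}$ and $D_0=c$; hence $D_k=(xc)^k c$ and
\[
\sum_{k\ge 1}D_k \;=\; c\cdot\frac{xc}{1-xc} \;=\; c^2-c,
\]
using the identity $c(1-xc)=1$ coming from $c=1+xc^2$. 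Therefore the pairs satisfying (A) have generating function $xc'-(c^2-c)$, and multiplying by $x\cdot xc$ and simplifying via $xc^2=c-1$ gives the Case~I contribution $xc(x^2c'+xc-c+1)$. Summing the two cases produces the stated formula. The main obstacle is this Case~I enumeration, in particular the count of 231-avoiders whose top values form a decreasing sequence; once the recursion is set up, the Catalan identity $xc^2=c-1$ both telescopes the sum and rewrites the answer into the proposition's form.
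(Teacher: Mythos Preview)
Your proof is correct, and the structural decomposition is exactly the one the paper uses: splitting around the maximum $g$ and the small essential entry $e$, and then casing on whether there is an entry of $\pi_R$ strictly between $g$ and $e$.  Your case (B) (``$v$ not first in $\pi_R$'') is precisely the paper's case (S1) (``region $C$ nonempty''), and your case (A)-with-$v$-first is the paper's (S2)-but-not-(S1); both arguments reach $x^4(c')^2$ for the first case in the same way.

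The only genuine difference is how the second case is counted.  The paper observes that the word formed by $\pi_L$ followed by $e$ is exactly a permutation with an essential \emph{rightmost} entry and invokes Proposition~\ref{prop-extreme-essential} to get $x^2c'+xc-c+1$ immediately.  You instead compute the complementary count directly, via the recursion $D_k = xc\cdot D_{k-1}$ for $231$-avoiders whose top $k$ values are decreasing, summing to $c^2-c$ using $c(1-xc)=1$.  This is a perfectly valid alternative --- in effect you are re-deriving the relevant piece of Proposition~\ref{prop-extreme-essential} from scratch --- so your argument is a bit longer but more self-contained, while the paper's is shorter because it leans on the earlier lemma.
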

\begin{proof}
Take $\pi$ of length $n$ specifying the hypotheses, and suppose that $\pi(j)=n$ and that the small essential entry is $\pi(k)$.  As $n$ must be involved in at least one copy of $231$, $\pi([1,j)\times(\pi(k),n))$ must be nonempty; let $\pi(i)$ denote the greatest entry in this region.  By this choice of $i$, $\pi([1,j)\times(\pi(i),n))$ is empty, and because $\pi(k)$ is essential, $\pi((j,n]\times [1,\pi(i)))$ contains only the entry $\pi(k)$.

\begin{figure}
\begin{center}
\psset{xunit=0.008in, yunit=0.008in}
\psset{linewidth=0.005in}
\begin{pspicture}(0,-10)(200,210)
\psframe[linecolor=darkgray,fillstyle=solid,fillcolor=lightgray,linewidth=0.02in](0,0)(75,35)
\psframe[linecolor=darkgray,fillstyle=solid,fillcolor=lightgray,linewidth=0.02in](0,45)(75,80)
\psframe[linecolor=darkgray,fillstyle=solid,fillcolor=lightgray,linewidth=0.02in](85,85)(115,195)
\psframe[linecolor=darkgray,fillstyle=solid,fillcolor=lightgray,linewidth=0.02in](125,85)(200,195)
\pscircle*(20,80){0.04in}
\pscircle*(80,200){0.04in}
\pscircle*(120,40){0.04in}
\rput[c](37.5,17.5){$A$}
\rput[c](37.5,62.5){$B$}
\rput[c](102.5,140){$C$}
\rput[c](162.5,140){$D$}
\psaxes[dy=1000, Dy=1, dx=1000, Dx=1, tickstyle=bottom, showorigin=false, labels=none](0,0)(200,200)
\psline(20,-3)(20,3)
\psline(80,-3)(80,3)
\psline(120,-3)(120,3)
\rput[c](20,-9){$i$}
\rput[c](80,-9){$j$}
\rput[c](120,-9){$k$}
\end{pspicture}
\end{center}
\caption{A permutation with a small essential entry in which the greatest entry is involved in at least one copy of $231$ but is not essential.}
\label{fig-small-essential}
\end{figure}
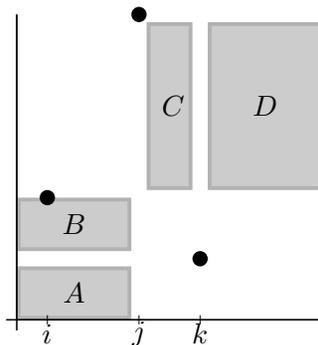

We therefore have three types of entries: the small essential $\pi(k)$, the entries in $\pi([1,j)\times[1,\pi(i)])$, and the entries in $\pi((j,n]\times(\pi(i),n]))$.  We further divide these latter regions into $A,B,C$ and $D$, as indicated in Figure~\ref{fig-small-essential}.

As the entry $n$ is not essential, one of two situations must occur:
\begin{itemize}
\item[(S1)] $\pi$ has an entry in $C$, or
\item[(S2)] $\pi(k)$ forms a copy of $231$ with two entries from $B$.
\end{itemize}
Conversely, any permutation of this form that satisfies (S1) or (S2) is of the desired form.

First we count the permutations satisfying (S1).  In this case the entries in $A\cup B$ form a $231$-avoiding permutation, as do the entries in $C\cup D$; thus both sets of entries are counted by $c$.  The generating function for arrangements of $\pi(k)$ among the entries in $A\cup B$ is therefore $x^2c'$, and this is the same as the generating function for arrangements of $\pi(k)$ among the entries in $C\cup D$.  Multiplying these functions together double-counts the entry $\pi(k)$, but fails to count $\pi(n)$, so the total contribution of the permutations of this type satisfying (S1) is $x^4c'^2$.

Now we need to count the permutations that satisfy (S2) but not (S1).  We know that $\pi$ does not have a entry in $C$ and that the entries of $\pi$ in $D$ avoid $231$.  Furthermore, $\pi(k)$ and the entries in $A\cup B$ form a permutation with an essential rightmost entry, so their generating function is $x^2c'+xc-c+1$ by Proposition~\ref{prop-extreme-essential}.  After taking into account the contribution of $n$, permutations of this type contribute $xc(x^2c'+xc-c+1)$, proving the proposition.
\end{proof}

\begin{proposition}\label{prop-large-essential}
The generating function for permutations in $\Av(231)^{+1}$ without a small essential entry, with a large essential entry that is not the greatest entry, and in which the greatest entry participates in at least one copy of $231$ is given by $x^2c'(x^2c'+xc-c+1)+(x^2c'+x^2c-c+x+1)(c-1)$.
\end{proposition}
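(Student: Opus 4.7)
The plan is to decompose each $\pi$ satisfying the hypotheses uniquely in terms of its greatest entry $n = \pi(j)$ and the unique non-greatest large essential entry $v = \pi(\ell)$. First I would establish uniqueness of $v$: since $v$ is essential it appears in every $231$ pattern, and since $v$ is large it is never the minimum entry of a pattern; so in any $231$ involving $n$, where $n$ is the maximum, $v$ must be the middle entry. A short analysis (similar to the opening of the proof of Proposition~\ref{prop-small-essential}) then shows $\ell < j$, every entry of $\pi$ to the left of $n$ other than $v$ has value less than $m := \min\{\pi(i) : i > j\}$, and $j = m+1$. Consequently $\pi$ has the form
\[
\pi = \alpha_1\,v\,\alpha_2\,n\,\beta,
\]
where $\alpha := \alpha_1\alpha_2$ is a $231$-avoider on $\{1,\ldots,m-1\}$, $\beta$ is a $231$-avoider on $\{m,\ldots,n-1\}\setminus\{v\}$, and $v > m$.

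Next I would classify the $231$ patterns of $\pi$ into three types: type $(i)$ is $vnw$ with $w \in \beta$ and $w < v$ (always present, since $m < v$); type $(ii)$ is $avw$ with $a \in \alpha_1$, $w \in \alpha_2$, $\pi(w) < \pi(a)$; and type $(iii)$ is $vbw$ with $b, w \in \beta$, $\pi(b) > v$, $\pi(w) < v$, and $b$ preceding $w$. In conjunction with Propositions~\ref{prop-extreme-essential} and~\ref{prop-small-essential}, the current proposition is implicitly restricted to the case where the greatest entry is not essential, equivalent to the existence of at least one type $(ii)$ or type $(iii)$ pattern. The two summands of the claimed formula correspond to whether a type $(ii)$ pattern is present.

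In the first case, where a type $(ii)$ pattern exists, any candidate small essential entry would have to lie simultaneously in $\alpha_2$ and in the set $\beta_{<v}$ of $\beta$-entries with value below $v$, an impossibility, so the no-small-essential condition is automatic. The pairs $(\alpha, \ell)$ for which type $(ii)$ appears are exactly the split positions $\ell \in \{1,\ldots,m\}$ on a $231$-avoider $\alpha$ of length $m - 1$ that witness an inversion, numbering $mC_{m-1} - C_m$, with left-hand generating function $x^2c' + xc - c + 1$. The right-hand portion $n\beta$ together with the choice of $v$ is unconstrained: $L$ choices of $v$ times $C_L$ choices of $\beta$ (where $L = n - m - 1$), giving generating function $x^2c'$. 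Multiplying produces the first summand $x^2c'(x^2c' + xc - c + 1)$.

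In the second case no type $(ii)$ pattern exists, so $\alpha_1$ is pointwise smaller than $\alpha_2$ or one of them is empty; the Catalan recurrence gives left-hand generating function $c - 1$. The no-small-essential condition now forces $|\beta_{<v}| \geq 2$, for otherwise the unique element of $\beta_{<v}$ would be the minimum of every type $(i)$ and type $(iii)$ pattern and thus small essential. The greatest-not-essential condition forces type $(iii)$, equivalently $|\beta_{>v}| \geq 1$ together with $\beta$ not being the concatenation of its $\beta_{<v}$ entries followed by its $\beta_{>v}$ entries. With $L = n - m - 1$ and $k = v - m$, the number of valid $(v, \beta)$ at fixed $L$ is
\[
\sum_{k=2}^{L-1}(C_L - C_kC_{L-k}) = (L-2)C_L - \sum_{k=2}^{L-1}C_kC_{L-k};
\]
applying the Catalan convolution $\sum_{k=0}^{L}C_kC_{L-k} = C_{L+1}$ to the second piece and collecting terms collapses this into the right-hand generating function $x^2c' + x^2c - c + x + 1$. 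Multiplication yields the second summand $(c-1)(x^2c' + x^2c - c + x + 1)$, and summing the two cases completes the proof. The main obstacle will be this Case B simplification together with the careful translation of the two hypotheses in Case B into the combinatorial constraints on $\beta$.
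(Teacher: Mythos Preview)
Your proposal is correct and follows essentially the same route as the paper: the same block decomposition $\alpha_1\,v\,\alpha_2\,n\,\beta$ around the greatest entry and the unique non-greatest large essential entry, and the same two-case split (your ``type~(ii) present/absent'' is exactly the paper's (L1)/(L2)). The only differences are in bookkeeping: in Case~1 you place the constraint on the $\alpha$-side (type~(ii) present, giving $x^2c'+xc-c+1$) and leave the $\beta$-side free (giving $x^2c'$), whereas the paper nominally attaches these factors the other way round via Proposition~\ref{prop-extreme-essential}; and in Case~2 you compute the $\beta$-side generating function directly through the Catalan convolution, while the paper obtains the same expression by subtracting from the Proposition~\ref{prop-extreme-essential} formula the contribution of configurations with $|\beta_{<v}|=1$.
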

\begin{proof}

By arguments analogous to the proof of Proposition~\ref{prop-small-essential} it can be established that these permutations are of the form depicted in Figure~\ref{fig-large-essential} (in this figure the essential large entry is $\pi(i)$).  Note that, as indicated by the figure, $\pi$ must contain a entry in region $C$ as otherwise the greatest element of $\pi$ would not lie in a copy of $231$.  We divide these permutations into two types:
\begin{itemize}
\item[(L1)] there is a copy of $231$ containing $\pi(i)$ and entries in $A\cup B$ or
\item[(L2)] there is no such copy of $231$.
\end{itemize}

\begin{figure}
\begin{center}
\psset{xunit=0.008in, yunit=0.008in}
\psset{linewidth=0.005in}
\begin{pspicture}(0,-10)(200,210)
\psframe[linecolor=darkgray,fillstyle=solid,fillcolor=lightgray,linewidth=0.02in](0,0)(35,80)
\psframe[linecolor=darkgray,fillstyle=solid,fillcolor=lightgray,linewidth=0.02in](45,0)(80,80)
\psframe[linecolor=darkgray,fillstyle=solid,fillcolor=lightgray,linewidth=0.02in](90,85)(200,135)
\psframe[linecolor=darkgray,fillstyle=solid,fillcolor=lightgray,linewidth=0.02in](90,145)(200,195)
\pscircle*(40,140){0.04in}
\pscircle*(85,200){0.04in}
\pscircle*(125,85){0.04in}
\rput[c](17.5,40){$A$}
\rput[c](62.5,40){$B$}
\rput[c](142.5,110){$C$}
\rput[c](142.5,167.5){$D$}
\psaxes[dy=1000, Dy=1, dx=1000, Dx=1, tickstyle=bottom, showorigin=false, labels=none](0,0)(200,200)
\psline(40,-3)(40,3)
\psline(120,-3)(120,3)
\rput[c](40,-9){$i$}
\rput[c](120,-9){$j$}
\end{pspicture}
\end{center}
\caption{A permutation with a large essential entry in which the greatest entry participates in a copy of $231$ but is not essential.}
\label{fig-large-essential}
\end{figure}
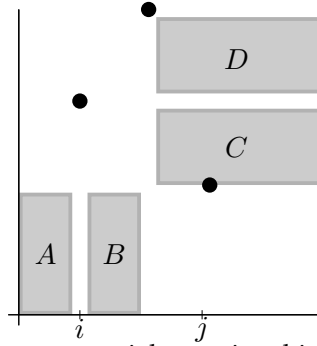

In case (L1), the entries in $A\cup B$ must avoid $231$ because $\pi(i)$ is essential, so they are counted by $c$.  Thus $x^2c'$ is the generating function for the number of arrangements of $\pi(i)$ together with the entries in $A\cup B$.  We then have that $\pi(i)$ is a leftmost essential element of the permutation given by it and the entries in $C\cup D$, so Proposition~\ref{prop-extreme-essential} shows that these entries are counted by $x^2c'+xc-c+1$.  Multiplying these functions double-counts $\pi(i)$ but does not count the greatest entry of $\pi$, so the contribution of the (L1) permutations is $x^2c'(x^2c'+xc-c+1)$.

In case (L2), $\pi(i)$ and the entries in $A\cup B$ form a nonempty $231$-avoiding permutation, and are thus counted by $c-1$.  The entries in $C\cup D$ together with $\pi(i)$ form a permutation with an essential leftmost entry, but we cannot directly apply Proposition~\ref{prop-extreme-essential} because if $\pi$ had a unique entry in $C$ then that entry would be a small essential entry, and we do not wish to count such permutations.  Thus we subtract the generating function for permutations with an essential leftmost entry of value $2$; it is easily seen that this generating function is $x(c-xc-1)$, and so the contribution of permutations in case (L2) is $(x^2c'+x^2c-c+x+1)(c-1)$, completing the proof.
\end{proof}

\begin{theorem}
The generating function for the permutations that almost avoid $231$ is
$$
\frac{1-5x-6x^2+45x^3-24x^4-(1+x-4x^2+x^3)(1-4x)^{3/2}}%
{-2x^2(1-4x)^{3/2}}.
$$
\end{theorem}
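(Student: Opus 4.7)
The plan is to partition $\Av(231)^{+1}$ into five disjoint classes, apply the four preceding propositions to enumerate them, and solve the resulting linear equation for the generating function $f$.

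For the partition, each $\pi\in\Av(231)^{+1}$ falls into exactly one of the following classes: (a) $\pi\in\Av(231)$, contributing $c$; (b) $\pi\in\Av(231)^{+1}\setminus\Av(231)$ whose greatest entry lies in no copy of $231$, counted by $2xc(f-c)$ via Proposition~\ref{prop-no-greatest}; (c) those in $\Av(231)^{+1}\setminus\Av(231)$ whose greatest entry is essential, counted by $E:=x^2c'+xc-c+1$ via Proposition~\ref{prop-extreme-essential}; (d) those whose greatest entry participates in a $231$ but is not essential and in which some small essential entry exists, counted by $x^4c'^2+xc\,E$ via Proposition~\ref{prop-small-essential}; and (e) those whose greatest entry participates in a $231$ but is not essential, no small essential entry exists, and some large essential entry distinct from the greatest exists, counted by $x^2c'\,E+(x^2c'+x^2c-c+x+1)(c-1)$ via Proposition~\ref{prop-large-essential}. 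Exhaustiveness follows because every $\pi\in\Av(231)^{+1}\setminus\Av(231)$ has at least one essential entry, which by Proposition~\ref{prop-essential-elements} is either small or large; disjointness is immediate from the defining conditions.

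Summing the five contributions and isolating the single occurrence of $f$, which sits in the term $2xc(f-c)$, produces
$$f(1-2xc)=c-2xc^2+E(1+xc+x^2c')+x^4c'^2+(x^2c'+x^2c-c+x+1)(c-1).$$
The Catalan identity $xc^2=c-1$ rewrites the coefficient on the left as $\sqrt{1-4x}$ and reduces $c-2xc^2$ to $2-c$; differentiating it gives $c'=c^2/\sqrt{1-4x}$, so every occurrence of $c'$ on the right can be replaced by a rational expression in $c$, $x$, and $\sqrt{1-4x}$. Substituting $c=(1-\sqrt{1-4x})/(2x)$ and collecting terms reduces the right-hand side to a rational function of $x$ and $(1-4x)^{3/2}$, which rearranges into the claimed closed form.

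The main obstacle is the algebraic simplification: the cross terms $c'^2$, $c\,c'$, and $Ec'$ must be reduced using $xc^2=c-1$ (and its derivative), and the resulting expression must be grouped into a numerator matching $1-5x-6x^2+45x^3-24x^4-(1+x-4x^2+x^3)(1-4x)^{3/2}$ over the denominator $-2x^2(1-4x)^{3/2}$. The combinatorial content of the proof lies entirely in the partition and the four preceding propositions; the final rational-function manipulation is mechanical and can be verified by any computer algebra system.
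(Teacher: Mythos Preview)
Your proposal is correct and follows exactly the paper's approach: the paper's proof simply states that Propositions~\ref{prop-no-greatest}--\ref{prop-large-essential} yield the identity
\[
f=c+2xc(f-c)+E+x^4c'^2+xcE+x^2c'E+(x^2c'+x^2c-c+x+1)(c-1),
\]
and that the closed form follows, whereas you have spelled out the underlying five-part partition and the algebraic reduction that the paper leaves implicit. Your justification of exhaustiveness and disjointness, and your identities $1-2xc=\sqrt{1-4x}$ and $c'=c^2/\sqrt{1-4x}$, are all correct.
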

\begin{proof}
Letting $f$ denote the generating function for permutations that almost avoid $231$, Propositions~\ref{prop-no-greatest}--\ref{prop-large-essential} show that
\begin{eqnarray*}
\lefteqn{f=c
+
2xc(f-c)
+
x^2c'+xc-c+1
+
x^4c'^2+xc(x^2c'+xc-c+1)
}
\\ & &
+
x^2c'(x^2c'+xc-c+1)+(x^2c'+x^2c-c+x+1)(c-1),
\end{eqnarray*}
from which the desired solution follows.
\end{proof}

\section{Open problems}

Our computations indicate that $|\Av_n(321)^{+1}|<|\Av_n(231)^{+1}|$ for all $n\ge 4$, which begs for a combinatorial explanation:

\begin{problem}
Construct a length-preserving injection from $\Av(321)^{+1}$ to $\Av(231)^{+1}$.
\end{problem}

Finally, we conclude with a conjecture about the exact enumeration problem.

\begin{conjecture}
For all $t$, the generating functions for $\Av(231)^{+t}$ and $\Av(321)^{+t}$ are rational in $x$ and $\sqrt{1-4x}$.
\end{conjecture}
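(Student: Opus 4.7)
Let $K=\mathbb{Q}(x,\sqrt{1-4x})=\mathbb{Q}(x,c)$, where $c=(1-\sqrt{1-4x})/(2x)$ is the Catalan generating function; these two fields coincide because $xc^2-c+1=0$. A crucial feature for what follows is that $K$ is closed under $d/dx$, since the derivative of any rational expression in $x$ and $\sqrt{1-4x}$ is again of that form. The plan is to establish the two halves of the conjecture separately, in each case reducing the enumeration to a finite combination of operations that stay inside $K$.

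For $\Av(321)^{+t}$ I would extend the RSK analysis of Section~2. By Proposition~\ref{greene-321}, the longest $321$-avoiding subsequence of $\pi$ has length $\lambda_1+\lambda_2$, where $\lambda=\mathrm{sh}\,P(\pi)$, and so $\pi\in\Av(321)^{+t}$ if and only if $\lambda_3+\lambda_4+\cdots\le t$. Grouping shapes by their \emph{tail} $\mu=(\lambda_3,\lambda_4,\ldots)$, which is a partition of some $s\le t$, yields
$$
|\Av_n(321)^{+t}|
=
\sum_{\mu:\,|\mu|\le t}\;
\sum_{\substack{\lambda_1\ge\lambda_2\ge\mu_1\\ \lambda_1+\lambda_2=n-|\mu|}}
\bigl(f^{(\lambda_1,\lambda_2,\mu_1,\ldots,\mu_r)}\bigr)^2.
$$
The outer sum is finite, so it suffices to handle each fixed $\mu$. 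The hook length formula factors cleanly: the hooks in the leftmost $\mu_1$ columns and in the bottom rows depend only on $\mu$, while the remaining hooks in rows~$1$ and~$2$ are controlled by $\lambda_1$ and $\lambda_2$ alone. Writing $f^\lambda$ as a quotient of factorials times a polynomial correction in $\lambda_1$ and $\lambda_2$, squaring, and summing over $\lambda_1+\lambda_2=n-|\mu|$ reduces each inner sum to a finite linear combination of Catalan-like convolutions of the form $\sum_k p(k,n)\,C_{k+a}C_{n-k-b}$, whose generating functions lie in $K$ as polynomial combinations of the derivatives $x^j c^{(j)}$. (The formula in Section~2 is exactly the case $t=1$, $\mu=(1)$, so the empirical closed form there would be the first output of this machinery.)

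For $\Av(231)^{+t}$ I would generalize the essential-entries framework of Section~3. Call a subset $E\subseteq[n]$ a \emph{deletion set} for $\pi$ if $|E|\le t$ and $\pi$ with the entries indexed by $E$ removed lies in $\Av(231)$. Using the first-max decomposition (every nonempty $231$-avoider has the form $\alpha n\beta$ with $\alpha<\beta$ in value and $\alpha,\beta\in\Av(231)$), one recursively splits $\pi\in\Av(231)^{+t}$ around its greatest entry and classifies the finitely many configurations of a minimal deletion set relative to $\alpha$, $\beta$, and the maximum. Each configuration contributes an equation expressing the corresponding generating function as a polynomial in $x$, $c$, and generating functions for $\Av(231)^{+t'}$ with $t'<t$; solving this finite polynomial system inductively on $t$ keeps the solution inside $K$, with Propositions~\ref{prop-no-greatest}--\ref{prop-large-essential} providing the base case $t=1$.

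The main obstacle is the $\Av(231)^{+t}$ side: even for $t=2$ the number of configurations of deletion-set entries grows well beyond the handful treated in Section~3, and the inclusion-exclusion needed to avoid multiply counting permutations that admit several minimal deletion sets becomes intricate. I would therefore first nail down the $\Av(321)^{+t}$ case, where the RSK summation is uniform in $t$, and then seek a more structural approach to $\Av(231)^{+t}$---perhaps a transfer-matrix model indexed by the local structure of $\pi$ around each deletion entry, or a description inside the geometric grid-class framework---that replaces the exploding case analysis with a bounded linear system over $K$.
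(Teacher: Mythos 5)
The statement you are addressing is an open conjecture: the paper gives no proof of it, and indeed even the $t=1$ instance for $\Av(321)^{+1}$ is only verified empirically there (the closed-form generating function in Section~2 is obtained by ``an empirical calculation in Maple''). So what is required is a complete argument, and your proposal does not supply one. On the $\Av(321)^{+t}$ side your reduction is genuinely promising and correct as far as it goes: by Proposition~\ref{greene-321}, $\pi\in\Av(321)^{+t}$ if and only if $\lambda_3+\lambda_4+\cdots\le t$ where $\lambda=\sh P(\pi)$, and grouping by the tail $\mu$ reduces the problem to finitely many inner sums. But the decisive step --- that each fixed-$\mu$ sum $\sum_n x^n\sum_{\lambda_1+\lambda_2=n-|\mu|}\bigl(f^\lambda\bigr)^2$ lies in $K=\mathbb{Q}(x,\sqrt{1-4x})$ --- is asserted rather than proved. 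After the hook length formula the summands have the form $\bigl(\tfrac{n!}{\ell_1!\,\ell_2!}\bigr)^2Q(\ell_1,\ell_2)$ with $\ell_1+\ell_2=n+d$ for a constant $d$ depending on $\mu$; when $d>0$ this is $\binom{n}{\ell_1}^2$ times a \emph{rational} (not polynomial) function of $\ell_1$ and $n$ --- already for $\mu=(1)$ the paper's formula carries $(k+2)^2$ and $(n-k)^2$ in the denominator. Such sums are P-recursive by creative telescoping, but P-recursive does not imply algebraic, let alone membership in $K$; you would need an actual argument (a closed-form evaluation, or a combinatorial identity) uniformly over all tails $\mu$, and you give none.

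On the $\Av(231)^{+t}$ side the gap is larger and you concede it yourself: the configuration analysis and the inclusion--exclusion over overlapping minimal deletion sets are not carried out. Moreover, even granting the decomposition, your claim that ``solving this finite polynomial system inductively keeps the solution inside $K$'' does not follow: a polynomial system over $K$ can have its solutions in a proper algebraic extension of $K$. The $t=1$ argument of Section~3 succeeds precisely because the single equation produced by Propositions~\ref{prop-no-greatest}--\ref{prop-large-essential} is affine-linear in the unknown $f$; you would need to show that the same linearity (or at least triangularity) persists for all $t$. As it stands the proposal is a reasonable research plan, with a correct and useful structural observation in the $321$ case, but it does not prove the conjecture.
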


We note that there has been similar work done for sets of permutations with at most $t$ copies of these patterns.  B\'ona~\cite{bona:the-number-of-p:} proved that the generating function for $\Av(231^{\le t})$ is rational in $x$ and $\sqrt{1-4x}$ (see also Mansour and Vainshtein~\cite{mansour:counting-occurr:} and Brignall, Huczynska, and Vatter~\cite{brignall:decomposing-sim:}).  For the pattern $321$, Noonan~\cite{noonan:the-number-of-p:} enumerated $\Av(321^{\le 1})$, while Fulmek~\cite{fulmek:enumeration-of-:} counted $\Av(321^{\le 2})$ and conjectured that the generating function for $\Av(321^{\le t})$ is rational in $x$ and $\sqrt{1-4x}$.

The study of almost avoidance is extended to pairs of permutations of length $3$ by Griffiths, Smith, and Warren~\cite{griffiths:almost-avoiding:}.

\bigskip

{\bf Acknowledgements.}  We would like to thank Mike Atkinson for fruitful discussions and the anonymous referees for their helpful suggestions.

\def\cprime{$'$}

\end{document}